\newtheorem{theorem}{Theorem}[section]
	\newtheorem{lemma}[theorem]{Lemma}
	\newtheorem{proposition}[theorem]{Proposition}
\theoremstyle{definition}
	\newtheorem{definition}[theorem]{Definition}
	\newtheorem{example}[theorem]{Example}
\theoremstyle{remark}
	\newtheorem{remark}[theorem]{Remark}
\newcommand{\ZZ}{\mathbb{Z}}
\newcommand{\NN}{\mathbb{N}}
\newcommand{\KK}{\mathbb{K}}
\newcommand{\set}[1]{\{#1\}}
\newcommand{\with}{\,:\,}
\newcommand{\qq}[1]{``#1''}
\DeclareMathOperator{\Tor}{Tor}
\DeclareMathOperator{\sdepth}{sdepth}
\DeclareMathOperator{\depth}{depth}
\DeclareMathOperator{\spdim}{spdim}
\DeclareMathOperator{\pdim}{pdim}
\DeclareMathOperator{\rk}{rk}
\newcommand{\Bc}{\mathcal{B}}
\newcommand{\I}{\mathrm{I}}
\newcommand{\Q}{\mathrm{Q}}
\begin{document}

\title{Betti posets and the Stanley depth}

\author{Lukas Katth\"an}

\address{Goethe-Universit\"at Frankfurt, Institut f\"ur Mathematik, 60054 Frankfurt am Main, Germany}
\email{katthaen@math.uni-frankfurt.de}

\subjclass[2010]{Primary: 05E40; Secondary: 13D02,16W50.}

\keywords{Monomial ideal; LCM-lattice; Betti poset; Stanley depth; Stanley conjecture.}

\begin{abstract}
Let $S$ be a polynomial ring and let $I \subseteq S$ be a monomial ideal.
In this short note, we propose the conjecture that the Betti poset of $I$ determines the Stanley projective dimension of $S/I$ or $I$.
Our main result is that this conjecture implies the Stanley conjecture for $I$, and it also implies that
\[ \sdepth S/I \geq \depth S/I - 1.\]
Recently, Duval et al.~found a counterexample to the Stanley conjecture,
and their counterexample satisfies $\sdepth S/I = \depth S/I - 1$.
So if our conjecture is true, then the conclusion is best possible.
\end{abstract}

\maketitle

\section{Introduction}

Let $S$ be a polynomial ring and $I \subseteq S$ a monomial ideal.
In this note we consider the \emph{Stanley depth} of $S/I$ and of $I$, which is a combinatorial invariant.
We refer the reader to \cite{intro} for a short introduction to the subject and to \cite{H} for a comprehensive survey.

The \emph{lcm-lattice} $L_I$ of a monomial ideal $I \subseteq S$ is the lattice of all least common multiples of subsets of the minimal generators of $I$.
It is known that the isomorphism type of $L_I$ determines the projective dimension of $I$, cf.~\cite{GPW}.
Further, the \emph{Betti poset} $\Bc(I) \subset \ZZ^n$ is the poset of all multidegrees in which $S/I$ has non-vanishing Betti numbers.
It is known that the Betti poset is a subposet of $L_I$ and it is determined by the latter.
Recently, Tchernev and Varisco \cite{TV}, and also Clarke and Mapes \cite{CM14b} showed that Betti poset already determines the projective dimension of $I$, in fact, it even determines the full structure of the minimal free resolution.
In \cite{lcm}, Ichim, the author and Moyano Fern\'andez showed that the Stanley projective dimension of $S/I$ and $I$ are determined by the isomorphism type of $L_I$ as well.
Here, the \emph{Stanley projective dimension} of a module $M$ can be defined as $\spdim M = \dim S - \sdepth M$. 
In the present paper, we propose the following extension of that result:
\begin{restatable*}{conjecture}{qbig}
\label{q:big}
	The Betti poset of a monomial ideal $I$ determines the Stanley projective dimension of $S/I$ and $I$.
	
	More precisely, if $I \subseteq S$ and $I' \subseteq S'$ are two monomial ideals in two polynomial rings $S$ and $S'$ such that $\Bc(I) \cong \Bc(I')$,
	then it holds that $\spdim_S S/I = \spdim_{S'} S'/I'$ and $\spdim_S I = \spdim_{S'} I'$.
\end{restatable*}
\noindent The significance of this conjecture stems from the following result:
\begin{restatable*}{theorem}{mainthm}
\label{thm:main}
	If \Cref{q:big} is true, then for any monomial ideal $I \subset S$, it holds that
	\begin{align*}
		\sdepth S/I &\geq \depth S/I - 1 \quad\text{and}\\
		\sdepth I &\geq \depth I.
	\end{align*}
\end{restatable*}
The original motivation for the research on the Stanley depth is the Stanley conjecture \cite[Conjecture 5.2]{St}, 
which asserts that $\sdepth M \geq \depth M$ for every $\ZZ^n$-graded finitely generated $S$-module $M$.
Very recently, the Stanley conjecture was disproven by Duval, Goeckner, Klivans and Martin \cite{counterexample}.
Indeed, these authors construct a monomial ideal $I$ in some polynomial ring $S$, such that 
\[ 
	\sdepth S/I = \depth S/I - 1.
\]
Thus, our \Cref{q:big} would imply the Stanley conjecture for ideals,
and it would give the best-possible bound for the Stanley depth of cyclic modules $S/I$.

We also show that \Cref{q:big} can be reduced to the following special case:
\begin{restatable*}{conjecture}{conestep}
\label{conj:onestep}
	Let $I \subset S = \KK[x_1,\dotsc,x_n]$ be a squarefree monomial ideal, let further $I' := (I\colon x_n)$ and
	assume that $\Bc(I) \cong \Bc(I')$.
	Then it holds that $\sdepth S/I = \sdepth S/I'$, or equivalently $\spdim S/I = \spdim S/I'$.
	Similarly, it holds that $\sdepth I = \sdepth I'$, or equivalently $\spdim I = \spdim I'$.
\end{restatable*}

This note is structured as follows.
In \Cref{sec:prelim} we review some background necessary for stating \Cref{q:big}.
Also, we add some remarks about this conjecture.
In the subsequent \Cref{sec:disc} we provide the proof of \Cref{thm:main} and of the equivalence of \Cref{q:big} with \Cref{conj:onestep}.
Finally, in Subsection \ref{ssec:generic} we show that a weak version of \Cref{q:big} holds for generic ideals.

\section{Statement of the conjecture}\label{sec:prelim}
Throughout the paper, let $\KK$ denote some fixed field.
By $S$ and $S'$ we denote polynomial rings over $\KK$, which we always consider with the fine grading.

\subsection{The Stanley depth}
Consider the polynomial ring $S = \KK[x_1, \dotsc, x_n]$ endowed with the fine $\ZZ^n$-grading.
Let $M$ be a finitely generated (multi-)graded $S$-module, and let $m \in M$ be a homogeneous element.
Let $Z \subset \set{x_1, \ldots , x_n}$ be a subset of the set of indeterminates of $S$.
The $\KK[Z]$-submodule $m \KK[Z]$ of $M$ is called a \emph{Stanley space} of $M$ if $m \KK[Z]$ is a free $\KK[Z]$-module.
A \emph{Stanley decomposition} of $M$ is a finite family
\[
	\mathcal{D}=(\KK[Z_i],m_i)_{i\in \mathcal{I}}
\]
in which $Z_i \subset \set{x_1, \ldots ,x_n}$ and $m_i\KK[Z_i]$ is a Stanley space of $M$ for each $i \in \mathcal{I}$ with
\[
	M \cong \bigoplus_{i \in \mathcal{I}} m_i\KK[Z_i]
\]
as a multigraded $\KK$-vector space.
This direct sum carries the structure of an $S$-module and has therefore a well-defined depth.
The \emph{Stanley depth} of $M$, $\sdepth M$, is defined to be the maximal depth of a Stanley decomposition of $M$.
Similarly, the \emph{Stanley projective dimension} $\spdim M$ of $M$ is defined as the minimal projective dimension of a Stanley decomposition of $M$.
Note that 
\[
	\spdim M + \sdepth M = n
\]
by the Auslander-Buchsbaum formula.

In the sequel, we will concentrate on modules which are either cyclic $S/I$ or ideals $I \subset S$.
In this case, Herzog, Vladoiu and Zheng \cite{HVZ} provide a convenient alternative description of the Stanley depth in terms of interval partitions.
Note that there is no known relation between $\spdim S/I$ and $\spdim I$.

\subsection{The lcm-lattice and the Betti poset}
Let $I \subset S$ be a monomial ideal.
The \emph{lcm-lattice} $L_I$ of $I$ is the lattice of all least common multiples of subsets of the minimal generators of $I$, together with a minimal element $\hat{0}$.

The following two results by Gasharov, Peeva and Welker, resp. Ichim, the author and Moyano Fern\'andez connect the lcm-lattice with projective dimension and the Stanley projective dimension.

\begin{theorem}\label{thm:origin}
	Let $I \subset S$ and $I' \subset S'$ be two monomial ideals.
	If there exists a surjective join-preserving map $L_I \rightarrow L_{I'}$, then
	\begin{align*}
		\pdim S'/I' &\leq \pdim S/I, \text{ and}  & \text{\emph{\cite{GPW}}} \\
		\spdim S'/I' &\leq \spdim S/I.  & \text{\emph{\cite{lcm}}}
	\end{align*}
	The corresponding statements hold as well for $I$ and $I'$ instead of $S/I$ and $S'/I'$.
	In particular, the isomorphism type of $L_I$ determines both the projective dimension and the Stanley projective dimension of both $S/I$ and $I$.
\end{theorem}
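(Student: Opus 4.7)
The plan is to prove both inequalities by transferring combinatorial data along the surjective join-preserving map $f : L_I \to L_{I'}$, treating the $\pdim$ and $\spdim$ parts by parallel but distinct arguments, and then to deduce the ``in particular'' claim as the trivial consequence of applying the inequalities in both directions. The structural fact I would rely on throughout is that for any $m' \in L_{I'}$ the preimage $f^{-1}(L_{I',\leq m'})$ is closed under joins in $L_I$ (since $f$ preserves joins and order ideals in $L_{I'}$ are closed under meets with $m'$ in the target), and surjectivity then guarantees that this preimage is nonempty and hence has a unique maximum $\tilde m \in L_I$ with $f(\tilde m) = m'$. This canonical ``lift on principal order ideals'' is the combinatorial engine that drives both parts.

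For the projective dimension inequality I would invoke the Gasharov--Peeva--Welker formula
\[
\beta_{i,m}(S/I) \;=\; \dim_\KK \tilde H_{i-2}\bigl(\Delta(\hat 0, m)_{L_I};\,\KK\bigr),
\]
valid for every $m \in L_I$, and analogously for $L_{I'}$. Given $m' \in L_{I'}$, restrict $f$ to the interval $[\hat 0, \tilde m]_{L_I} \to [\hat 0, m']_{L_{I'}}$; this is still surjective and join-preserving, and its fibers are join-closed sub-posets of $L_I$. By a Nerve-Lemma / Quillen fiber argument one shows that the induced map of order complexes of open intervals is a homotopy equivalence (or at least a homology surjection), giving $\beta_{i,m'}(S'/I') \leq \beta_{i,\tilde m}(S/I)$. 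Since every nonzero Betti number of $S'/I'$ occurs at some such $m'$, summing over multidegrees yields $\pdim S'/I' \leq \pdim S/I$. The corresponding bound for $I$ versus $I'$ follows from $\pdim I = \pdim S/I - 1$ (and analogously for $I'$).

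For the Stanley projective dimension inequality I would appeal to the framework of Herzog--Vladoiu--Zheng: the invariant $\spdim S/I$ (resp.\ $\spdim I$) can be read off from interval partitions of a finite poset $P_{S/I}$ (resp.\ $P_I$) constructed from multidegrees, and, as shown in \cite{lcm}, the relevant combinatorial content is already encoded in $L_I$. Concretely, I would pull back an optimal interval partition of $P_{S/I}$ along the canonical lift above: intervals $[a,b]$ in $P_{S'/I'}$ with $a \leq b$ in some squarefree shift can be constructed as images of intervals in $P_{S/I}$ whose top element projects onto $b$, and one checks that the resulting family still tiles $P_{S'/I'}$ and that each interval has projective dimension bounded by that of its preimage. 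The ideal case is handled in parallel with the obvious modifications to the HVZ poset. Finally, the isomorphism statement is immediate: an isomorphism $L_I \cong L_{I'}$ furnishes surjective join-preserving maps in both directions, so all four inequalities become equalities.

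The main obstacle is the topological step in the $\pdim$ argument, namely establishing that $f : [\hat 0, \tilde m]_{L_I} \to [\hat 0, m']_{L_{I'}}$ induces a surjection in reduced homology of the open-interval order complexes. A direct approach via Quillen's fiber theorem requires checking that the fibers $f^{-1}(\leq a)$ are contractible for $a \in (\hat 0, m')_{L_{I'}}$, which boils down to verifying that each such fiber is a join-closed subposet of $L_I$ with a unique maximum; this is exactly where the join-preserving hypothesis is used. The parallel obstacle for $\spdim$ is showing that pushing forward interval partitions respects the ``depth'' of each interval—i.e., that no Stanley-space dimension drops unexpectedly under the lift—which is precisely the technical core of \cite{lcm} and is what makes $\spdim$, like $\pdim$, a function of $L_I$ alone.
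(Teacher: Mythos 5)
Your proposal's \(\spdim\) half ultimately defers the key step (``the technical core of \cite{lcm}'') to the cited reference, which is consistent with the paper itself: \Cref{thm:origin} is not proved in the paper at all, but quoted from \cite{GPW} and \cite{lcm}. The problem is the \(\pdim\) half, where you do attempt an argument, and that argument has a genuine gap. Your plan is to restrict \(f\) to \([\hat 0,\tilde m]_{L_I}\to[\hat 0,m']_{L_{I'}}\), apply a Quillen-fiber argument to the open intervals, and conclude \(\beta_{i,m'}(S'/I')\le \beta_{i,\tilde m}(S/I)\). But \(f\) does not even restrict to a map of open intervals: elements strictly below \(\tilde m\) may map onto \(m'\) itself (or to \(\hat 0\)), so the comparison of order complexes of \((\hat 0,\tilde m)_{L_I}\) and \((\hat 0,m')_{L_{I'}}\) is ill-posed, and the claimed inequality is in fact false. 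Concretely, take \(I=(x,y)\subset \KK[x,y]\) and \(I'=(z)\subset\KK[z]\), with \(f\) sending \(\hat 0\mapsto\hat 0\) and \(x,y,xy\mapsto z\); this is surjective and join-preserving, \(m'=z\) gives \(\tilde m=xy\), yet \(\beta_{1,z}(S'/I')=1\) while \(\beta_{1,xy}(S/I)=0\) (the open interval below \(z\) is empty, the one below \(xy\) is two points, and these are not even comparable by any map, let alone homotopy equivalent). So no multidegree-by-multidegree bound of the form you assert can hold.

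The correct statement from \cite{GPW} is \(\beta_{i,m'}(S'/I')\le \sum_{m\in f^{-1}(m')}\beta_{i,m}(S/I)\), i.e.\ one must sum over the whole fiber, and its proof is not a fiber-lemma comparison of intervals but a relabeling argument: any multigraded free resolution of \(S/I\) with labels in \(L_I\) (e.g.\ the minimal one, or the Taylor complex) becomes, after relabeling each summand by \(f\), a possibly nonminimal free resolution of \(S'/I'\); exactness is checked degreewise using precisely the join-preserving property. This yields \(\pdim S'/I'\le\pdim S/I\) (and the ideal version via \(\pdim I=\pdim S/I-1\)) without any claim about individual intervals. Your observation that \(f^{-1}(L_{I',\le m'})\) is join-closed with a unique maximum mapping onto \(m'\) is correct and useful, but it does not by itself control the homology of the open intervals in the way you need. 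For the \(\spdim\) inequality, the transfer of Stanley decompositions along \(f\) really is the nontrivial content of \cite{lcm} (note in particular that the two ambient rings have different numbers of variables, so ``taking images of intervals'' does not obviously produce a partition); as the paper treats this as a black box, citing it is fine, but your sketch should not present it as if the tiling property were a routine check.
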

Here, $\pdim M$ denotes the projective dimension of $M$.
For any given finite atomistic lattice $L$, one can find a monomial ideal $I \subseteq S$ in some polynomial ring such that $L \cong L_I$, cf. \cite{P,M,lcm}.
The preceding theorem thus implies that the invariants $\pdim_\Q L := \pdim S/I, \pdim_\I L := \pdim I, \spdim_Q L := \spdim S/I$ and $\spdim_\I L := \spdim I$ do not depend on the choice of $I$.
The subscripts $\Q$ and $\I$ stand for \qq{quotient} and \qq{ideal}, respectively.

We denote by $\beta_{i,m}^S(S/I) := \dim_\KK \Tor^S_i(S/I, \KK)_m$ the multigraded Betti number of $S/I$ over $S$ in homological degree $i$ and multidegree $m$.
It is known that the Betti numbers can be computed in terms of the lcm-lattice by the following formula, cf.~\cite[Theorem 2.1]{GPW}:
\[
	\beta_{i,m}^S(S/I) =
	\begin{cases}
		\dim_\KK \tilde{H}_{i-2}(L_{< m}; \KK) &\text{ if } m \in L_I, \\
		0 &\text{ otherwise.}
	\end{cases}
\]
Here, $\tilde{H}_{i-2}(L_{< m}; \KK)$ denotes the reduced simplicial homology of the order complex of $L_{< m} \setminus \set{\hat{0}_L} = \set{n \in L\setminus \set{\hat{0}_L} \with n < m}$.
Motivated by this formula, the \emph{Betti poset} was introduced in \cite{CM14}.
\begin{definition}
	Let $L$ be a finite atomistic lattice.
	The \emph{Betti poset} of $L$ is the subset
		\[ \Bc(L) := \set{m \in L \with \tilde{H}_{i-2}(L_{< m}; \KK) \neq 0 \text{ for some }i}. \]
	Note that $\Bc(L)$ might depend on $\KK$.
	If $I \subseteq S$ is a monomial ideal, then we set $\Bc(I) := \Bc(L_I)$.
\end{definition}

It turns out that the Betti poset of a monomial ideal contains the same homological information about the ideal as the lcm-lattice:
\begin{theorem}[Theorem 5.3 of \cite{TV}, Theorem 2.1 of \cite{CM14b}]
	The Betti poset $\Bc(I)$ of a monomial ideal $I \subseteq S$ determines the structure of the minimal free resolution of $S/I$.
	In particular, it determines the Betti numbers and the projective dimension of $S/I$.
\end{theorem}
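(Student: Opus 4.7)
The plan is to reduce the statement to a single topological claim about poset fibers, then feed it into an existing machinery for building minimal free resolutions from order complexes. The ``Betti number'' part of the theorem is the easier half, while the ``structure of the minimal free resolution'' part requires more.

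First I would prove the key topological lemma: for every $m \in L_I$, the inclusion $\Bc(I)_{< m} \hookrightarrow (L_I)_{<m}$ induces an isomorphism on reduced homology (in fact a homotopy equivalence) of the respective order complexes. The idea is that elements $n \in (L_I)_{<m} \setminus \Bc(I)$ are, by definition, ``topologically invisible'': all reduced Betti numbers of $(L_I)_{<n}$ vanish. I would try to remove such elements one at a time, from the bottom up, and show that each removal does not change the homotopy type of $(L_I)_{<m}$. The cleanest way to organize this is via Quillen's Theorem~A applied to the inclusion $\iota$: for each $n \in (L_I)_{<m}$, one examines the comma fiber $\iota / n := \{b \in \Bc(I)_{<m} \with b \leq n\}$ and argues, by descending induction on $|L_{<m}|$, that it is contractible. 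A secondary option is discrete Morse theory: construct an acyclic matching on $(L_I)_{<m}$ whose critical cells are exactly the chains inside $\Bc(I)_{<m}$.

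Given that lemma, the Betti number statement is immediate from the Gasharov--Peeva--Welker formula $\beta_{i,m}^S(S/I) = \dim_\KK \tilde H_{i-2}((L_I)_{<m};\KK)$: for $m \in \Bc(I)$ the right-hand side is computable from $\Bc(I)$, and for $m \notin \Bc(I)$ it is zero by definition. To upgrade this to the full minimal free resolution, I would invoke a construction that produces the resolution directly from the order complex data. One option is to work with Tchernev's poset resolutions, another is to use a minimal Morse resolution built on $\Delta(\Bc(I))$ as a ``homotopy model'' for $\Delta(L_I \setminus \{\hat 0\})$; the differentials in either framework are read off from chain maps between lower-order complexes, all of whose homotopy types are determined by $\Bc(I)$ via the lemma above. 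One must verify that distinct ideals with isomorphic Betti posets yield isomorphic (not merely quasi-isomorphic) minimal complexes, which follows from the minimality of the construction together with Nakayama's lemma.

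The main obstacle I anticipate is the topological lemma: proving that deleting non-Betti elements from $L_I$ preserves the homotopy type of every lower interval simultaneously. A naive induction will fail because removing an element from $(L_I)_{<m}$ can change the lower intervals of elements above $n$. The right way around this is to arrange the deletion order so that $n$ is only removed after one has confirmed that $(L_I)_{<n}$ is contractible (rather than merely acyclic), which for general posets requires an additional argument such as the Nerve Lemma or a careful discrete Morse matching. Once that is in place, the passage from homotopy equivalence of order complexes to isomorphism of minimal free resolutions is mostly formal, since the resolution's combinatorics are governed by the same topological data.
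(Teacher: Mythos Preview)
The paper does not give its own proof of this theorem: it is quoted verbatim as a result from the literature, with citations to Tchernev--Varisco \cite{TV} and Clarke--Mapes \cite{CM14b}, and no argument is supplied in the present paper. So there is nothing here to compare your proposal against.

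That said, your sketch is broadly in line with what those references actually do. The central step is indeed to show that passing from $L_I$ to $\Bc(I)$ does not alter the homology of the open lower intervals, and Tchernev--Varisco carry this out by removing one non-Betti element at a time (their ``reduction'' step) and then feeding the resulting poset into their poset-resolution/frame machinery to recover the minimal free resolution. Your Quillen Theorem~A / discrete Morse alternative for the topological lemma is a reasonable variant. One caution: you flag the contractibility-versus-acyclicity issue yourself, and you are right that this is where a naive argument could stall; in the cited papers this is handled by working directly with homology and with an explicit chain-level construction rather than by first upgrading to a homotopy equivalence, so if you pursue your route you should not expect to need genuine contractibility of the fibers.
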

Given these results, it seems natural to ask whether the part of \Cref{thm:origin} concerning the Stanley projective dimension also extends to the Betti poset:
\qbig

\begin{remark}
\begin{asparaenum}
\item \Cref{q:big} seems a natural conjecture to us, and we have some evidence for it.
	Nevertheless, we are far from being convinced that this conjecture really holds.
	Moreover, it is possible that \Cref{q:big} holds for $S/I$ but not for $I$, or vice versa.
	In the sequel, all statements about quotients $S/I$ depend only on the part of \Cref{q:big} concerning quotients, 
	and similarly all statements about ideals $I$ depend only on the other part of \Cref{q:big}.

\item We know from \cite{IKM3} that $\pdim S/I = \spdim S/I$ for all ideals with up to five generators.
	Hence \Cref{q:big} holds for quotients of those ideals.
	Similarly, using the complete enumeration of lcm-lattices of ideals with four generators in \cite{IKM3}, we verified \Cref{q:big} for ideals with up to four generators.
\item As mentioned above, the counterexample to the Stanley conjecture by Duval et al satisfies $\sdepth S/I = \depth S/I - 1$.
	Given \Cref{thm:main} below, one could try to amplify the defect to also obtain a counterexample to \Cref{q:big}.
	One possibility would be to consider $S/I \otimes_\KK S/I$.
	However, while the depth is additive under this operation, the Stanley depth is only superadditive, i.e.,
	\[ \sdepth M \otimes_\KK N \geq \sdepth M + \sdepth N \]
	for $S$-modules $M,N$, see \cite[Proposition 2.10]{bku10} or also \cite[Theorem 3.1]{rauf2010}.
	So this does not immediately yields counterexamples to our conjecture.
\end{asparaenum}
\end{remark}

\section{Discussion of the conjecture}\label{sec:disc}

\subsection{An important consequence}
In this section we prove the following result.

\mainthm

Before we give the proof of \Cref{thm:main}, we collect some statements that we will use.
If $L$ is atomistic lattice and $a \in L$, then the \emph{rank} of $a$ is the number of atoms below it.
Further, recall that an element $a \in L$ is called \emph{meet-irreducible} if it cannot be written as a meet of two elements $b,c$ which are distinct from $a$. If $a \in L$ is meet-irreducible, then the subposet $L \setminus \set{a}$ is again a lattice.
The following is a special case of \cite[Lemma 6.4]{trees}.
\begin{lemma} 
\label{lemma:reduction}
	Let $p \in \NN$, $L$ be a finite atomistic lattice and $a \in L$ meet-irreducible.
	If $\rk a < 2p$, then it holds that $\spdim_\I L \leq \max\set{p, \spdim_\I L \setminus \set{a}}$.
\end{lemma}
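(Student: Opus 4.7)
My plan is to construct a Stanley decomposition of a monomial ideal $I$ realising $L$ from an optimal Stanley decomposition of an ideal $I'$ realising $L\setminus\set{a}$, by adjoining only a controlled family of Stanley spaces of projective dimension at most $p$. The bookkeeping would be done at the level of the Herzog--Vladoiu--Zheng interval partitions of the characteristic multidegree posets, where the projective dimension of a Stanley space translates into the codimension of the corresponding interval in $\ZZ^n$.

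Since $a$ is meet-irreducible in $L$ with unique cover $a^+$, the collapse $a\mapsto a^+$ is a surjective join-preserving map $L\to L\setminus\set{a}$; correspondingly, one can choose compatible monomial realisations with $I'\subseteq I$ differing in a single modified generator, so that the characteristic posets $P(I')\subseteq P(I)$ differ only in a localised region $R$ attached to $a$. Meet-irreducibility is essential here: it ensures that $R$ is a single Boolean interval of rank $r:=\rk a$ indexed by the atoms below $a$, rather than a branched set arising from multiple covers.

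Starting from an optimal interval partition of $P(I')$ realising $\spdim_\I(L\setminus\set{a})$, one inherits it verbatim on the common part and partitions $R$ separately via the classical symmetric decomposition of a rank-$r$ Boolean interval into subintervals of width $\lceil r/2\rceil$. The hypothesis $r<2p$, equivalently $\lfloor r/2\rfloor\leq p-1$, translates through a direct codimension count in $\ZZ^n$ into each new Stanley space having projective dimension at most $p$. Gluing the two partitions yields a Stanley decomposition of $I$ of projective dimension at most $\max\set{p,\spdim_\I(L\setminus\set{a})}$, which is the desired inequality.

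The main technical obstacle is the explicit Boolean structure of the local region $R$ and the verification that the codimension count really delivers a bound of $p$ after accounting for any coordinates fixed outside the slab by the chosen monomial realisation. Meet-irreducibility of $a$, rather than a weaker condition, is precisely what prevents $R$ from branching, making the symmetric partition directly applicable with a uniform bound on the projective dimensions of the new Stanley spaces.
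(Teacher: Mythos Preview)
The paper does not supply its own proof of this lemma; it is simply recorded as a special case of \cite[Lemma~6.4]{trees}. So there is no in-paper argument to compare against, only the cited source.

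Your overall strategy---start from an optimal HVZ interval partition realising $\spdim_\I(L\setminus\{a\})$ and extend it to one for $L$ by separately decomposing the ``new'' region $R$---is the right shape of argument and is in the spirit of what is done in \cite{trees}. The observation that meet-irreducibility of $a$ yields a join-preserving surjection $L\twoheadrightarrow L\setminus\{a\}$ via $a\mapsto a^+$ is also correct.

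The gap is at the step you yourself flag as ``the main technical obstacle'': the assertion that one can choose realisations $I'\subseteq I$ so that $R=P(I)\setminus P(I')$ is a single Boolean interval of rank $r=\rk a$. You give no construction with this property, and it fails for natural choices. For example, take $L$ with atoms $1,2,3$, where $1\vee2=a$ has rank~$2$ and all other joins equal $\hat 1$; then $a$ is meet-irreducible. Realise $L$ by $I=(wy,xy,wxz)$ and $L\setminus\{a\}$ by $I'=(wyz,xy,wxz)$ in $\KK[w,x,y,z]$: one has $I'\subsetneq I$ differing in a single generator, yet $P(I)\setminus P(I')=\{\{w,y\}\}$ is a single point, not a rank-$2$ Boolean interval. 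Moreover, even granting a Boolean $R=[\sigma,\sigma\cup T]$ with $|T|=r$, the symmetric partition controls $|B|$ only relative to $T$, whereas the projective dimension of the corresponding Stanley space is $n-|\sigma\cup B|$; the bound $\leq p$ thus also requires control of $|\sigma|$ and $n$, which again depends on the unspecified realisation.

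In short, what you have written is a plausible outline, but the two ingredients you defer---an explicit realisation making $R$ Boolean of the claimed rank, and the codimension bookkeeping showing the new Stanley spaces have projective dimension at most $p$---are not side issues: they are the entire content of the lemma. Without them the inequality has not been established.
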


	Recall that the \emph{length} $\ell = \ell(L)$ of a finite poset $L$ equals the maximal length of a strictly ascending chain 
	$ l_0 < l_1 < \dotsb < l_\ell $
	in $L$.
\begin{theorem}[Corollary 2.5, \cite{KSF}]\label{thm:length}
	For a finite atomistic lattice $L$, it holds that
	\begin{align*}
		\spdim_\Q L &\leq \ell(L) \quad\text{and}\\
		\spdim_\I L &\leq \ell(L) - 1.
	\end{align*}
\end{theorem}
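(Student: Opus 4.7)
The plan is to exploit Conjecture \ref{q:big} to replace the lcm-lattice $L = L_I$ by a carefully chosen atomistic lattice $L'$ with $\Bc(L') \cong \Bc(L)$ and tightly controlled length, then apply Theorem \ref{thm:length} --- and, for the ideal bound, Lemma \ref{lemma:reduction}. A preliminary observation is that $\ell(\Bc(L)) \leq \pdim S/I$: by minimality of the free resolution, each Betti degree $m \neq \hat 0$ with $\beta_{i,m}(S/I) \neq 0$ and $i \geq 1$ admits a predecessor $m' < m$ in $\Bc(L)$ with $\beta_{i-1, m'}(S/I) \neq 0$, so the maximum homological degree supporting a nonzero Betti number is a strictly monotone $\set{0, 1, \ldots, \pdim S/I}$-valued function along chains in $\Bc(L)$.

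I then construct $L'$ by minimally completing $\Bc(L)$ to an atomistic lattice --- for instance, by adjoining a single new top element $\hat 1$ above $\Bc(L)$ together with whatever joins at the top level are needed to make $L'$ into a lattice. The key verification is that $\Bc(L') \cong \Bc(L)$, which amounts to showing that the open intervals below the adjoined elements have acyclic order complex (e.g., by exhibiting an explicit cone point). Granted this, $\ell(L') \leq \ell(\Bc(L)) + 1 \leq \pdim S/I + 1$, so Theorem \ref{thm:length} yields $\spdim_\Q L' \leq \ell(L') \leq \pdim S/I + 1$; by Conjecture \ref{q:big}, $\spdim S/I = \spdim_\Q L' \leq \pdim S/I + 1$, which is equivalent to $\sdepth S/I \geq \depth S/I - 1$ via Auslander--Buchsbaum.

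For the ideal bound, Theorem \ref{thm:length} alone gives only $\spdim_\I L' \leq \ell(L') - 1 \leq \pdim S/I$, which exceeds $\pdim I$ by one. I would then iterate Lemma \ref{lemma:reduction} with $p = \pdim I$: starting from $L^{(0)} = L'$, at each step I locate a meet-irreducible $a \in L^{(k)}$ with $\rk a < 2\pdim I$, set $L^{(k+1)} = L^{(k)} \setminus \set{a}$, and obtain $\spdim_\I L^{(k)} \leq \max\set{\pdim I, \spdim_\I L^{(k+1)}}$. After finitely many reductions the length of $L^{(k)}$ drops to at most $\pdim I + 1 = \pdim S/I$, so Theorem \ref{thm:length} yields $\spdim_\I L^{(k)} \leq \pdim I$, and unrolling the chain of inequalities gives $\spdim_\I L' \leq \pdim I$. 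Combined with Conjecture \ref{q:big}, this proves $\spdim I \leq \pdim I$, i.e., $\sdepth I \geq \depth I$.

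The main obstacle is the construction of $L'$: it must simultaneously be atomistic, have Betti poset isomorphic to $\Bc(L)$, and add at most one level of length to $\Bc(L)$. Verifying that the completion creates no spurious Betti degrees requires a careful homological analysis of the adjoined open intervals. A secondary difficulty in the ideal case is that the iterated reductions via Lemma \ref{lemma:reduction} may stall if every remaining meet-irreducible has rank at least $2\pdim I$ --- for instance, when the only candidate is the adjoined top $\hat 1$ and it has too many atoms below it; such a scenario would either need to be excluded by structural properties of $L'$, or handled by a separate argument.
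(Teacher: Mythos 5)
You have proved the wrong statement. The statement in question is \Cref{thm:length}, an \emph{unconditional} assertion about an arbitrary finite atomistic lattice $L$: namely $\spdim_\Q L \leq \ell(L)$ and $\spdim_\I L \leq \ell(L) - 1$. It makes no reference to \Cref{q:big}, and in the paper it is not proved at all but quoted from Corollary 2.5 of \cite{KSF}. What you have written instead is a proof sketch of \Cref{thm:main} (the conditional bounds $\sdepth S/I \geq \depth S/I - 1$ and $\sdepth I \geq \depth I$ under \Cref{q:big}), and your argument explicitly \emph{invokes} \Cref{thm:length} at its two crucial steps. As a proof of \Cref{thm:length} it is therefore circular and contains no relevant content: a genuine proof would have to work directly with Stanley decompositions (equivalently, interval partitions in the sense of \cite{HVZ}) of $S/I$ and of $I$ for an ideal $I$ with $L_I \cong L$, and produce from the length bound on the lattice a decomposition of small projective dimension --- for instance by induction on $\ell(L)$ --- none of which is attempted here.

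As a secondary remark, your sketch does track the paper's actual proof of \Cref{thm:main} reasonably closely (pass to a better ideal, adjoin a top element $\hat 1$ to the Betti poset, apply the conjecture, use \Cref{thm:length}, and iterate \Cref{lemma:reduction} for the ideal case), but even read that way it has gaps that the paper closes differently: the paper does not complete $\Bc(L)$ of an arbitrary ideal to a lattice, which is exactly the step you flag as problematic; instead it first replaces $I$ by the ideal $I'$ of \Cref{prop:maximal}, whose Betti poset is the face poset of an acyclic simplicial complex with complete codimension-one skeleton, so that $\Bc(I') \cup \set{\hat 1}$ is automatically an atomistic lattice with the same Betti poset and length $p+1$; and the gradedness of this lattice guarantees that every rank-$p$ element is meet-irreducible of rank $p < 2(p-1)$ (for $p>2$), so the \Cref{lemma:reduction} iteration never stalls --- the case $p=2$ being handled by a separate citation. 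But none of this bears on the statement you were asked to prove.
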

\newcommand{\Lc}{\mathcal{L}}
The following proposition summarizes several results of \cite{trees} in a form which is suitable for the present purpose.
\begin{proposition}\label{prop:maximal}
	Let $I \subseteq S$ be a monomial ideal and let $p := \pdim S/I$.
	Then there exists a monomial ideal $I' \subseteq S'$ in some polynomial ring $S'$ which satisfies the following properties:
	\begin{enumerate}
		\item $\pdim S'/I' = \pdim S/I$ and (thus) $\pdim I' = \pdim I$,
		\item $\spdim S'/I' \geq \spdim S/I$ and $\spdim I' \geq \spdim I$,
		\item the length of $\Bc(I')$ is $p$, and finally
		\item $\Bc(I')$ is the face poset of an acyclic simplicial complex.
	\end{enumerate}
\end{proposition}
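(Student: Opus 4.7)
My plan is to derive this proposition by combining several results from \cite{trees}. The underlying strategy rests on the monotonicity principle of \Cref{thm:origin}: whenever $L_{I'}$ admits a surjective join-preserving map onto $L_I$ (so that $L_{I'}$ is a \emph{refinement} of $L_I$), then $\spdim S'/I' \geq \spdim S/I$ and $\spdim I' \geq \spdim I$ automatically. Hence I look for a refinement $I \rightsquigarrow I'$ whose lcm-lattice projects onto $L_I$ while preserving the projective dimension and simultaneously shaping the Betti poset into the form asserted in (3) and (4).

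Concretely, I would proceed as follows. First, I apply an expansion construction from \cite{trees} that separates generators (for instance by introducing additional variables or polarizing carefully) in such a way that (i) the minimal free resolution is unchanged, giving property (1), and (ii) the resulting map $L_{I'} \to L_I$ is surjective and join-preserving, giving property (2) via \Cref{thm:origin}. Second, among all such expansions I pick one that spreads the Betti multidegrees out maximally, so that $\ell(\Bc(I')) = p$. Here the upper bound $\pdim S'/I' \leq \ell(\Bc(I'))$ is automatic because any chain of multidegrees with nonvanishing Tor lifts to a chain in the Betti poset, while the reverse inequality holds since each nontrivial Betti multidegree at homological degree $i$ forces a chain of length $i$ below it; after the expansion the length is capped at $p$ by construction, giving property (3). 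Finally, I invoke the structural result from \cite{trees} that a maximal expansion can be arranged so that $\Bc(I')$ coincides with the face poset of an acyclic simplicial complex $\Delta$, intuitively a tree-like or Taylor-reduced complex, yielding property (4).

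The main obstacle is property (4). Properties (1)–(3) are consequences of the soft monotonicity of \Cref{thm:origin} together with routine chain-length bookkeeping, but producing an expansion whose Betti poset is literally a face poset of an acyclic complex is delicate: one must fill in potential topological holes by adding elements to $L_{I'}$ without introducing any new Betti multidegrees at higher homological degree. This is the technical heart of \cite{trees}, and I would simply quote the relevant theorems from there, devoting the proof to verifying that their output simultaneously satisfies all four properties.
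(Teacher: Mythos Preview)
Your high-level strategy matches the paper's: pass to a refinement whose lcm-lattice surjects onto $L_I$, so that \Cref{thm:origin} yields property (2), and then invoke structural results from \cite{trees} for (3) and (4). However, your execution diverges from the paper in two places that leave real gaps.

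First, the paper does not use an ``expansion construction'' on the ideal (polarization, extra variables) that keeps the minimal free resolution intact. It works purely at the lattice level: among all atomistic lattices on $k$ atoms (with $k$ the number of generators of $I$), one considers the finite subposet $\Lc(p)$ of those $L$ with $\pdim_\Q L = p$, ordered by the existence of a surjective join-preserving map. Since $L_I \in \Lc(p)$ and $\Lc(p)$ is finite, there is a maximal element $L' \geq L_I$; then $I'$ is any monomial ideal with $L_{I'} \cong L'$. Property (1) holds by the definition of $\Lc(p)$, and property (2) by \Cref{thm:origin}. The resolution of $I'$ need not resemble that of $I$ at all beyond having the same length, so your claim that the resolution is ``unchanged'' is not what is happening, nor is it needed.

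Second, your argument for (3) is circular: you assert $\ell(\Bc(I')) \leq p$ holds ``by construction'', but nothing you have done enforces that bound, and for a general ideal it can fail. In the paper, (3) and (4) are not argued separately; both are read off from the explicit classification of maximal lattices in \cite{trees} (Theorems 4.3 and 4.5, Corollary 4.4): any maximal $L'$ is isomorphic to $\{F \subseteq [k] : \Delta|_F \text{ is } \KK\text{-acyclic}\}$ for some $(p-1)$-dimensional $\KK$-acyclic complex $\Delta$ with complete $(p-2)$-skeleton, and $\Bc(L')$ coincides with the face poset of $\Delta$. Then $\ell(\Bc(I')) = \dim\Delta + 1 = p$ is immediate. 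So the correct shape of the argument is: choose $L'$ maximal via the finiteness argument, then quote the classification; your intermediate ``spreading out'' step is neither needed nor justified.
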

\begin{proof}
	Let $k$ be the number of generators of $I$. 
	Consider the set $\Lc$ of isomorphism classes of atomistic lattices with $k$ atoms.
	This is a finite poset, where the order is given by setting $L \geq L'$ if there is a surjective join-preserving map $L \to L'$.
	Consider the subposet $\Lc(p) \subseteq \Lc$ of all lattices $L$ such that $\pdim_\Q L = p$.
	Clearly, $L_I \in \Lc(p)$. As this is a finite poset, we can find a maximal element $L' \in \Lc(p)$ with $L' \geq L_I$.
	Such a lattice is called \emph{maximal} in \cite{trees}.
	Let further $I' \subset S'$ be a monomial ideal with $L_{I'} = L'$.
	By construction, it holds that $\pdim S'/I' = \pdim S/I$, and by \Cref{thm:origin} it holds that $\spdim S'/I' \geq \spdim S/I$ and $\spdim I' \geq \spdim I$.
	
	For the remaining parts of the claim we recall the description of the maximal lattices from \cite{trees}.
	By the Theorems 4.3 and 4.5 of \cite{trees}, there exists a $(p-1)$-dimensional simplicial complex $\Delta$ on $k$ vertices
	whose $(p-2)$-skeleton is complete and which is $\KK$-acyclic, such that
		\[ L' \cong \set{ F \subseteq[k] \with \Delta|_F \text{ is $\KK$-acyclic}}.\]
	Further, the Betti poset of $I'$ coincides with the face poset of $\Delta$ (cf.~\cite[Corollary 4.4]{trees}).
	So the last claim follows and for the penultimate claim we note that $\ell(\Bc(I')) = \dim \Delta + 1 = p$.
\end{proof}

\begin{proof}[Proof of \Cref{thm:main}]
	Let $p := \pdim S/I$.
	We may replace the ideal $I$ by the ideal $I'$ of \Cref{prop:maximal} without changing the validity of the claim.
	Let $L' := \Bc(I') \cup \set{\hat{1}}$, where $\hat{1}$ is a new maximal element.
	By part (4) of \Cref{prop:maximal}, $L'$ is an atomistic lattice with $\Bc(L') = \Bc(I')$.
	So we can find another monomial ideal $I'' \subset S''$ in some polynomial ring with $L_{I''} = L'$.
	By our assumption on \Cref{q:big}, it follows that $\spdim S'/I' = \spdim S''/I''$ and $\spdim I' = \spdim I''$.

	Note that the length of $L'$ equals $p+1$.
	Hence, using \Cref{thm:length} we can conclude that
	\[
	\spdim S'/I' = \spdim S''/I'' \leq \ell(L') = p+1 = \pdim S'/I' + 1.
	\]
	So the claim for $S'/I'$ is proven.
	
	It remains to show the claim for $I'$.
	If $p = 2$, then it holds that $\sdepth I' \geq \depth I'$ by \cite[Corollary 7.2]{trees} (see also \cite[Lemma 4.3]{KSF}).
	So we may assume that $p > 2$.
	We will use \Cref{lemma:reduction}.
	For this, note that $L'$ is a graded poset of rank $p+1$.
	Hence every element $a \in L'$ of rank $p$ is meet-irreducible.
	Further, $p>2$ implies that $\rk a = p < 2(p-1)$.
	
	So we conclude with \Cref{lemma:reduction} 
	that $\spdim_\mathrm{I} L' \leq \max(p-1, \spdim_\mathrm{I} L'\setminus\set{a})$.
	Iterating this procedure, we can remove all elements of rank $p$ from $L'$ and obtain a lattice $\tilde{L}$ of length $p$.
	In conclusion, we have that 
	\[
		\spdim I' = \spdim_\mathrm{I} L' \leq \max(p-1, \spdim_\mathrm{I} \tilde{L}) \leq \max(p-1, \ell(\tilde{L}) - 1) = p - 1 = \pdim I'. \qedhere
	\]
\end{proof}

\subsection{An explicit version of the conjecture}
We believe that the following more explicit formulation might be helpful in proving \Cref{q:big}.
\conestep

\begin{proposition}
	\Cref{conj:onestep} is equivalent to \Cref{q:big}.
\end{proposition}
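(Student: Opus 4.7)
The direction \Cref{q:big} $\Rightarrow$ \Cref{conj:onestep} is immediate: the hypothesis $\Bc(I) \cong \Bc(I')$ in \Cref{conj:onestep} is precisely the hypothesis of \Cref{q:big}, and the conclusion is the same. So the plan focuses on the other direction, \Cref{conj:onestep} $\Rightarrow$ \Cref{q:big}. My strategy is to reduce both ideals $I, I'$ (with $\Bc(I) \cong \Bc(I')$) to a common canonical form by a sequence of Betti-poset-preserving colon operations, invoking \Cref{conj:onestep} to propagate $\spdim$ along the way.

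Concretely, I would first reduce to the case of squarefree ideals by polarization: for any monomial ideal $I \subseteq S$, the polarization $I^{pol} \subseteq S^{pol}$ is squarefree with $L_I \cong L_{I^{pol}}$ (so $\Bc(I) \cong \Bc(I^{pol})$) and preserves the Stanley projective dimension. Then, for a squarefree ideal $I \subseteq S = \KK[x_1, \dotsc, x_n]$ with Betti poset $P$, I would build a chain
\[ I = I_0 \supseteq I_1 \supseteq \dotsb \supseteq I_r \]
of squarefree ideals in $S$, where each $I_{k+1} = (I_k \colon x_{j_k})$ and $\Bc(I_k) \cong P$ throughout. Although \Cref{conj:onestep} is stated for $x_n$ specifically, it applies to any $x_{j_k}$ after relabeling variables, so that $\spdim_S S/I_k = \spdim_S S/I_{k+1}$ (and similarly for ideals) at every step. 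The chain is extended as long as such a Betti-preserving contraction is available, terminating at some $I_r$ whose lcm-lattice $L_{I_r}$ should be a canonical form depending only on $P$. Applying the same construction to $I'$ produces $I'_s \subseteq S'$ with $L_{I'_s}$ isomorphic to $L_{I_r}$, and \Cref{thm:origin} then gives $\spdim_S S/I_r = \spdim_{S'} S'/I'_s$, proving \Cref{q:big} by chaining the equalities.

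The main obstacle will be establishing that this terminal lattice $L_{I_r}$ is indeed canonical, which breaks into two subclaims. Subclaim (i): whenever $L_I$ is strictly larger than the canonical target, there exists a variable $x_j$ for which $(I \colon x_j)$ has strictly smaller lcm-lattice while preserving $P$. The intuition is that a non-Betti element $a \in L_I \setminus P$ corresponds to an lcm whose lower open interval has acyclic order complex, and should be collapsible by dividing out some variable dividing $a$ without disturbing any lower interval of a Betti element. Subclaim (ii): any two atomistic lattices with Betti poset $P$ that admit no further Betti-preserving contractions are isomorphic. Both subclaims demand a detailed analysis of how the canonical join-preserving surjection $L_I \twoheadrightarrow L_{(I \colon x_j)}$ interacts with the reduced homology of the lower intervals, and this is where I expect the bulk of the technical work to lie.
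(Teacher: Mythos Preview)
Your outline matches the paper's route---reduce $L_I$ to a canonical lattice depending only on $\Bc(I)$ via Betti-preserving one-step reductions, each realized as a colon so that \Cref{conj:onestep} applies---but the two subclaims you leave open are exactly where the content lies, and the paper resolves them with ingredients you have not supplied. For your subclaim (ii), the canonical target is named explicitly rather than characterized as a terminal object: embed $L_I$ into the boolean lattice $\Sigma(A)$ on its atom set $A$ via the meet-preserving map $a \mapsto \{\text{atoms} \leq a\}$, set $B = \Bc(I) \subset \Sigma(A)$, and let $M(B)$ be the closure of $B$ under meets in $\Sigma(A)$. Results of Tchernev--Varisco give that $M(B)$ is atomistic with $\Bc(M(B)) = B$; since $M(B)$ is constructed from $B$ alone, isomorphic Betti posets give isomorphic $M(B)$, and no separate uniqueness argument for ``terminal'' lattices is needed.

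For your subclaim (i), the paper does \emph{not} iterate colons inside a fixed ring $S$. Instead it works purely at the lattice level: list the elements of $L_I \setminus M(B)$ by decreasing rank and delete them one at a time, producing a chain $M(B) = L_r \subsetneq \dotsb \subsetneq L_0 = L_I$ of atomistic lattices with meet-preserving inclusions and (by an inductive homology check) unchanged Betti poset. Each single step $L_i \supsetneq L_{i+1}$ is then realized, via \cite[Theorem 3.4]{lcm}, as $L_J \cong L_i$ and $L_{(J:x)} \cong L_{i+1}$ for some squarefree $J$ in some polynomial ring $S''$; since $\spdim_\Q$ and $\spdim_\I$ depend only on the lcm-lattice (\Cref{thm:origin}), \Cref{conj:onestep} transfers across each step even though the realizing ideals and rings change from step to step. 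Your plan of staying in the original $S$ and finding an $x_j$ with $(I:x_j)$ Betti-preserving is a strictly harder statement that is neither needed nor established. (Minor slip: colons enlarge ideals, so your chain should be ascending, not descending.)
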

The construction of $M(B)$ in the following proof is taken from Section 6 of \cite{TV}.
\begin{proof}
	\Cref{conj:onestep} is clearly a special case of \Cref{q:big}, so we only need to prove one implication.
	
	Let $I \subseteq S$ be a monomial ideal and set $L := L_I$. 
	Denote by $A \subset L$ the set of atoms of $L$ and let further $\Sigma(A)$ be the boolean algebra on $A$.
	There is an injective meet-preserving map $j: L \to \Sigma(A)$, which maps an element to the set of atoms below it.
	We consider $L$ as a subset of $\Sigma(A)$ via $j$.
	
	Let $B := j(\Bc(I))$ and let further $M(B) \subset \Sigma(A)$ be the set of all meets in $\Sigma(A)$ of subsets of $B$.
	Here, we consider the maximal element of $\Sigma(A)$ as the meet of the empty set.
	Then $M(B)$ is an atomistic lattice (\cite[Lemma 6.1]{TV}), and the inclusion $M(B) \subseteq L$ 
	preserves the meet. 
	Further, it holds that $\Bc(M(B)) = B$ by \cite[Proposition 6.5]{TV}.	
	
	We order the elements $a_1, \dotsc, a_r$ of $L \setminus M(B)$ be decreasing rank and set $L_i := L \setminus \set{a_1, \dots, a_i}$.
	This way, we obtain an increasing chain 
	\[ M(B) = L_r \subsetneq L_{r-1} \subsetneq \dotsb \subsetneq L_0 = L\]
	of lattices, where all the inclusions are meet-preserving (cf. \cite[Lemma 3.7]{trees}).
	It is easy to see by induction on $i$ that $\Bc(L_i) = \Bc(L)$ for all $i$.
	Indeed, this is obvious for $i = 0$.
	For $i > 0$, it holds that $a_{i+1} \notin \Bc(L) = \Bc(L_i)$, and hence the arguments given in Lemma 3.8 and Remark 3.9 of \cite{trees} show that $\Bc(L_{i+1}) = \Bc(L_i)$.
	
	Fix an $i$ such that $0 \leq i \leq r$. It follows from \cite[Theorem 3.4]{lcm}
	that there exists a squarefree monomial ideal $J \subset S''$ in some polynomial ring $S''$ and a variable $x \in S''$, such that $L_i \cong L_J$ and $L_{i+1} \cong L_{(J:x)}$.	
	Thus, \Cref{conj:onestep} implies that $\spdim_\Q L_i = \spdim_\Q L_{i+1}$ and $\spdim_\I L_i = \spdim_\I L_{i+1}$. As this holds for all $i$, we arrive at the conclusion that
	\[ \spdim S/I = \spdim_\Q L = \spdim_\Q M(B) \]
	and 
	\[ \spdim I = \spdim_\I L = \spdim_\I M(B). \]
	Now let $I' \subset S'$ be a second monomial ideal with $\Bc(I) \cong \Bc(I')$. This clearly implies that $M(\Bc(I)) \cong M(\Bc(I'))$, and hence that
		\[ \spdim S/I  = \spdim_\Q M(\Bc(I)) = \spdim_\Q M(\Bc(I')) = \spdim S'/I' \]
	and similar for $\spdim I$.
\end{proof}

\begin{remark}
\begin{asparaenum}
\item
	Note that the inequalities $\sdepth_S S/I \leq \sdepth_S S/I'$ and also $\sdepth I \leq \sdepth I'$ are clear,
	because every Stanley decomposition of $S/I$ and $I$ restricts to a Stanley decomposition of $S/I'$ and $I'$, respectively. 
	So the difficulty is to extend a Stanley decomposition of $S/I'$ or $I'$ to a Stanley decomposition of $S/I$ or $I$.

\item
	For \Cref{thm:main}, it would be enough to prove \Cref{conj:onestep} (and thus \Cref{q:big}) for those ideals which actually appear in the proof of \Cref{thm:main}.
	In particular, one may assume that the minimal free resolution of $I$ is supported on the Scarf complex, and that the latter is a stoss complex in the sense of \cite{trees}, i.e. an acyclic $(p-1)$-dimensional simplicial complex with a complete $(p-2)$-skeleton, where $p = \pdim S/I$.
	Note that such a resolution is a truncation of the Taylor resolution.

\item
	The assumption that $I$ is squarefree is inessential, as it does not affect the lcm-lattices.
	One may further assume in \Cref{conj:onestep} that all generators of $I$ have the same degree, cf. \cite[Proposition 5.12]{lcm}.
\end{asparaenum}
\end{remark}

It seems desirable to understand the implications of the condition $\Bc(I) \cong \Bc(I')$ in \Cref{conj:onestep}.
One possible approach is to consider the Hilbert series of $S/I$. Recall that it is given by
\[ \mathbf{H}(S/I; t_1, \dotsc, t_n) = \frac{1}{(1-t_1)\dotsm(1-t_m)}\sum_{m \in \ZZ^n}t^m \sum_{i\geq 0} (-1)^i \beta_{i,m}(S/I), \]
where we write $t^m = t_1^{m_1}\dotsm t_n^{m_n}$.
Now the condition $\Bc(I) \cong \Bc(I')$ implies that the Hilbert series of $S/I'$ has the same \qq{shape}, in the sense that no further cancellation of terms occurs.
As Stanley decompositions can be seen a decompositions of the Hilbert series, this might imply that the Stanley decompositions are also similar.
However, the following example shows that it is not enough to consider this \qq{shape} of the Hilbert series.
\begin{example}
Let $S = \KK[a,b,c,x,y]$ and consider the following two ideals
\begin{align*}
	I_1 &:= (a^2x^2,b^2x^2,c^2x^2,a^2b^2c^2,abcxy)\\
	I_2 &:= (a^2x^2,b^2x^2,c^2x^2,a^2b^2c^2,abcx)
\end{align*}
in $S$.
One can easily compute that $\pdim S/I_1 = 4$ and that $\pdim S/I_2 = 3$.
Further, both ideals have only five generators, so their Stanley projective dimensions coincide with the respective projective dimensions \cite{IKM3}.
In particular, their Betti posets are nonisomorphic and their Stanley projective dimensions differ.
Further, using the algorithm of \cite{IZ}, we computed that $\spdim I_1 = \spdim I_2$.

On the other hand, their Hilbert series have the same \qq{shape}.
The reason is that for all elements $m \in \Bc(I_1) \setminus \Bc(I_2)$ it holds that $\sum_{i \geq 0}(-1)^i \beta_{i,m}(S/I_1) = 0$.
\end{example}

\subsection{Generic ideals}\label{ssec:generic}
In this section we show that a weak version of \Cref{q:big} holds for generic ideals in the sense of Miller, Sturmfels and Yanagawa \cite{MSY}.
\begin{proposition}
	Let $I \subset S$ be a generic monomial ideal. Let further $I' \subseteq S'$ be a further monomial ideal,
	such that there is a surjective join-preserving map $L_I \to L_{I'}$ and assume that $\Bc(I) \cong \Bc(I')$.
	
	Then it holds that $\spdim_S S/I = \spdim_{S'} S'/I'$.
\end{proposition}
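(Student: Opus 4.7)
My plan is to prove the two inequalities separately. The bound $\spdim_{S'} S'/I' \leq \spdim_S S/I$ is immediate from \Cref{thm:origin} applied to the given surjective join-preserving map $L_I \to L_{I'}$.

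For the reverse direction I would exploit the construction $M(B) \subseteq \Sigma(A)$ from the proof of the equivalence of \Cref{q:big} with \Cref{conj:onestep}. That argument actually shows, \emph{without} invoking \Cref{conj:onestep}, that for any monomial ideal $J$ there is a chain of surjective join-preserving lattice maps
\[ L_J \twoheadrightarrow L_1 \twoheadrightarrow \dotsb \twoheadrightarrow M(\Bc(J)), \]
each step removing a single meet-irreducible non-Betti element. Combined with \Cref{thm:origin} this yields the unconditional bound $\spdim_{\Q} M(\Bc(J)) \leq \spdim_{\Q} L_J$. Applied to $J = I'$ it gives $\spdim_{\Q} M(\Bc(I')) \leq \spdim_{S'} S'/I'$.

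Since $\Bc(I) \cong \Bc(I')$ induces $M(\Bc(I)) \cong M(\Bc(I'))$, it suffices to prove the reverse equality $\spdim_{\Q} L_I = \spdim_{\Q} M(\Bc(I))$ when $I$ is generic. Granting this,
\[ \spdim_S S/I = \spdim_{\Q} L_I = \spdim_{\Q} M(\Bc(I)) = \spdim_{\Q} M(\Bc(I')) \leq \spdim_{S'} S'/I', \]
which together with the easy direction yields the claimed equality.

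The main obstacle is therefore establishing $\spdim_{\Q} L_I = \spdim_{\Q} M(\Bc(I))$ for generic $I$. A tempting shortcut would be to prove $L_I = M(\Bc(I))$ as lattices, but this fails even for simple strongly generic ideals: for $I = (a^2, b^2, ab, c)$ one checks that the lcm $a^2 b^2$ lies in $L_I$ but not in $M(\Bc(I))$. Instead I would verify that each individual step $L_i \twoheadrightarrow L_{i+1}$ of the chain preserves $\spdim_{\Q}$ when the starting ideal is generic. By \cite[Theorem 3.4]{lcm} each step is realized as $L_J \twoheadrightarrow L_{(J : x)}$ with $\Bc(J) \cong \Bc((J : x))$, so this amounts to a restricted case of \Cref{conj:onestep}. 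I expect the rigidity of the Scarf resolution, via \cite{MSY}, to enable an explicit matching of Stanley decompositions: the meet-irreducible multidegree being removed carries no Tor information (its open interval in the lcm-lattice is $\KK$-acyclic), and this vanishing should translate into a bijection of Stanley decompositions across the reduction step preserving the Stanley depth.
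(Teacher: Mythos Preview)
Your reduction to showing $\spdim_\Q L_I = \spdim_\Q M(\Bc(I))$ for generic $I$ is sound, but the final paragraph is not a proof: you reduce to a special case of \Cref{conj:onestep} and then only \emph{expect} that Scarf rigidity will produce a matching of Stanley decompositions. No mechanism is given for how acyclicity of the open interval below the removed multidegree would yield such a bijection, and this is exactly the unknown step. Worse, after the first deletion the intermediate lattice $L_1$ need not be the lcm-lattice of a generic ideal, so the hypothesis you would like to exploit is no longer available along the chain.

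The paper's argument avoids $M(B)$ entirely and is much shorter. It uses two facts you did not invoke. First, for generic $I$ the Stanley conjecture is known to hold, so $\spdim_S S/I \leq p := \pdim_S S/I$. Second, since $\Bc(I)$ equals the Scarf complex $\Delta(I)$, every element of $\Bc(I)$ is a join of atoms in a unique way; the isomorphism $\Bc(I)\cong\Bc(I')$ then forces $\Bc(I')=\Delta(I')$, so $\Delta(I')$ contains an element $a$ of rank $p$. Sending $b\in L_{I'}$ to the set of atoms below $b\wedge a$ gives a surjective join-preserving map onto a boolean lattice with $p$ atoms, whence $\spdim_{S'} S'/I' \geq p$ by \Cref{thm:origin}. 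Combining with the easy inequality $\spdim_{S'} S'/I' \leq \spdim_S S/I$ yields
\[
p \leq \spdim_{S'} S'/I' \leq \spdim_S S/I \leq p,
\]
and equality follows. The missing idea in your attempt is precisely this sandwich: bounding $\spdim S/I$ above via the known Stanley conjecture for generic ideals, and bounding $\spdim S'/I'$ below by projecting onto a boolean lattice.
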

The addition assumption that there is a map  $L_I \to L_{I'}$ is not a severe restriction, because in our proof of \Cref{thm:main} we only consider this situation.
\begin{proof}
	Let $p = \pdim_S S/I$.
	Recall that the Scarf complex of $I$ is the subset $\Delta(I) \subseteq L_I$ of those elements which can be written as a join of atoms in a unique way.
	In general, the Betti poset contains the Scarf complex, and in the generic situation these two coincide.
	So every element of the Betti poset of $I$ is a join of atoms in a unique way.
	But then this also holds for $I'$, and hence $\Bc(I') = \Delta(I')$ as well.
	In particular, there exists an element $a \in \Delta(I')$ of rank $p = \pdim_S S/I = \pdim_{S'} S'/I'$.
	We can construct a surjective join-preserving map from $L_{I'}$ to a boolean algebra on $p$ atoms by sending every element $b \in L_{I'}$ to the set of atoms below $b \wedge a$.
	The boolean algebra on $p$ atoms can be considered as lcm-lattice of an ideal $J$ generated by $p$ variables, and the latter has Stanley projective dimension $p$.
	Hence we conclude with \Cref{thm:origin} that $\spdim_{S'} S'/I' \geq p$.
	On the other hand, the Stanley conjecture holds for $S/I$ (as $I$ is generic), so $\spdim_S S/I \leq p$.
	Hence, using again \Cref{thm:origin} it follows that
	\[ p\leq \spdim_{S'} S'/I' \leq \spdim_S S/I \leq p, \]
	so the claim follows.
\end{proof}

\printbibliography

\end{document}